\newtheorem{theorem}{Theorem}
\nonstopmode \numberwithin{equation}{section}
\theoremstyle{definition}
\begin{document}

\title[ generalized Bessel-Maitland function ]{Unified integral operator involving generalized Bessel-Maitland function}


\author[W. A. Khan]{Waseem Ahmad Khan}
\address{Department of Mathematics\\ 
 Integral University, Lucknow-226026\\
India}
\email{ waseem08\_khan@rediffmail.com}
\author[K.S. Nisar]{Kottakkaran Sooppy Nisar*}
\address{epartment of Mathematics, College of Arts and Science at Wadi Aldawaser\\ 
11991, Prince Sattam bin Abdulaziz University, Alkharj\\
Kingdom of Saudi Arabia}
\email{ksnisar1@gmail.com}

%
\thanks{*Corresponding Author}

\keywords{ Generalized Bessel-Maitland function, Generalized (Wright) hypergeometric function and integrals.}
\subjclass[2010]{33C45, 33C60, 33E12}
\maketitle

\begin{abstract}
The main object of this article is to present an interesting double integral involving generalized Bessel-Maitland function defined by Ghaysuddin et al. \cite{9}, which is expressed in terms of generalized (Wright) hypergeometric function. We also considered some special cases as an application of the main result.
\end{abstract}

\section{First section}

Recently many authors namely, Ali \cite{1}, Choi and Agarwal \cite{3,4,5}, Khan et al. \cite{10,11,12}, Ghayasuddin et al. \cite{8,9}, Abouzaid et al. \cite{2} have introduced integral formulae associated with Bessel function. It has a wide application in the problem of Physics, Chemistry, Biology, Engineering and Applied Sciences. The theory of Bessel functions is closely associated with the theory of certain types of differential equations. An elaborate account of applications of Bessel functions are given in the book of Watson \cite{24}.

The Bessel-Maitland function $J_{\nu}^{\mu}(z)$ \cite{14} is defined by the following series representation:
$$J_{\nu}^{\mu} (z)=\sum\limits_{n=0}^{\infty}\frac{(-z)^n}{n!\Gamma(\mu n+\nu+1)}=\phi(\mu,\nu+1;-z),\eqno(1.1)$$

Currently, Singh et al. \cite{23} introduced the following generalization of Bessel-Maitland function:
$$J_{\nu,q}^{\mu,\gamma}=\sum\limits_{n=0}^\infty\frac{(\gamma)_{qn} (-z)^n}{\Gamma(\mu n +\nu+1)n!},\eqno(1.2)$$

where $\mu,\nu,\gamma \in \mathbb C, Re(\mu)\geq 0, Re(\nu)\geq-1, Re(\gamma)\geq 0$ and $q\in (0,1)\bigcup \mathbb N$ and $(\gamma)_{0}=1, (\gamma)_{qn}=\frac{\Gamma(\gamma+qn)}{\Gamma(\gamma)}$ denotes the generalized Pochhammer symbol.

With reference to the work mentioned above, Ghayasuddin et al. \cite{9} introduced and investigated a new extension of Bessel-Maitland function as follows:

$$J_{\nu,\gamma,\delta}^{\mu,q,p}(z)=\sum\limits_{n=0}^{\infty}\frac{(\gamma)_{qn}(-z)^n}{\Gamma(\mu n+\nu+1)(\delta)_{pn}},\eqno(1.3)$$

where $\mu,\nu,\gamma,\delta\in \mathbb C$, $\Re(\mu)\geq 0, \Re(\nu)\geq -1, \Re(\gamma)\geq 0, \Re(\delta)\geq 0; p,q>0$ and $q<\Re(\alpha)+p.$

The Mittag-Leffler (Swedish Mathematician) \cite{15} introduced the function $E_{\alpha}(z)$ and defined it as:
$$ E_{\alpha}(z) = \sum\limits_{n=0}^{\infty}\frac{z^{n}}{\Gamma(\alpha n +1)}, \eqno(1.4)$$
where $z\in \mathbb C$ and ${\Gamma (s)}$ is the Gamma function; ${\alpha \geq0}$.

The Mittag-Leffler function is a direct generalization of $\exp(z)$ in which $\alpha =1$. Mittag-Leffler function naturally occurs as the solution of fractional order differential equation or fractional order integral equations.

A generalization of $E_{\alpha}(z)$ was studied by Wiman \cite{25} where he defined the function $E_{\alpha,\beta}(z)$ as:
$$E_{\alpha,\beta}(z)= \sum\limits_{n=0}^{\infty}\dfrac{z^{n}}{\Gamma(\alpha n +\beta)}, \eqno(1.5)$$
where $\alpha, \beta\in\mathbb C; \Re(\alpha)>0, \Re(\beta)>0$ which is also known as Mittag-Leffler function or Wiman's function.

In (1903), Prabhakar \cite{16} introduced the function $E_{\alpha,\beta}^{\gamma}(z)$ in the form (see Killbas et al. \cite{13}) as:

$$E_{\alpha,\beta}^{\gamma}(z)= \sum\limits_{n=0}^{\infty}\dfrac{{(\gamma)_{n}}}{\Gamma(\alpha n +\beta)}\dfrac{z^{n}}{n!}, \eqno(1.6)$$

where $\alpha, \beta, \gamma\in\mathbb C  ; \Re(\alpha)>0, \Re(\beta)>0, \Re(\gamma)>0.$ 

Shukla and Prajapati \cite{19} defined and investigated the function $E_{\alpha,\beta}^{\gamma,q}(z)$ as:

$$E_{\alpha,\beta}^{\gamma,q}(z)= \sum\limits_{n=0}^{\infty}\dfrac{{(\gamma)_{qn}}}{\Gamma(\alpha n +\beta)}\dfrac{z^{n}}{n!}, \eqno(1.7)$$

where $\alpha, \beta, \gamma\in\mathbb C  ; \Re(\alpha)>0, \Re(\beta)>0, \Re(\gamma)>0, q \in (0,1)\bigcup \mathbb N $
and $(\gamma)_{qn} = \dfrac{\Gamma(\gamma +qn)}{\Gamma(\gamma)}$ denotes the generalized Pochhammer symbol which in particular reduces to $q^{qn}\prod\limits_{r=1}^{q}(\dfrac{\gamma +r-1}{q})_{n}$ if $q\in \mathbb N.$

Salim \cite{22} introduced a new generalized Mittag-Leffler function and defined it as:
$$E_{\alpha,\beta}^{\gamma,\delta}(z)= \sum\limits_{n=0}^{\infty}\dfrac{{(\gamma)_{n}}}{\Gamma(\alpha n +\beta)}\dfrac{z^{n}}{(\delta)_{n}},\eqno(1.8)$$
where $\alpha, \beta, \gamma, \delta\in\mathbb C; \Re(\alpha)>0, \Re(\beta)>0, \Re(\gamma)> 0, \Re(\delta)> 0.$ 

Afterward, Salim and Faraj \cite{21} introduced the generalized Mittag-Leffler function$E_{\alpha,\beta, p}^{\gamma,\delta,q}(z)$ which is defined as:
$$E_{\alpha,\beta, p}^{\gamma,\delta,q}(z)= \sum\limits_{n=0}^{\infty}\dfrac{{(\gamma)_{qn}}}{\Gamma(\alpha n +\beta)}\dfrac{z^{n}}{(\delta)_{pn}}, \eqno(1.9)$$
where ${\alpha,\beta,\gamma,\delta} \in \mathbb C, \min\{\Re(\alpha)>0, \Re(\beta)>0, \Re(\gamma)> 0, \Re(\delta)> 0\}>0; p,q>0$ and  ${q < \Re\alpha+p}.$

The generalization of the generalized hypergeometric series
${}_pF_{q}$ is due to Fox \cite{7} and Wright (\cite{26}, \cite{27}, \cite{28}) who studied the asymptotic expansion of the generalized (Wright) hypergeometric function defined by (see [p.21]\cite{20}; see also \cite{18}):
$${}_p\Psi_{q}\left[\begin{array}{ccc}
(\alpha_{1}~,~A_{1}), &.....,& (\alpha_{p}~,~A_{p});  \\
~ & ~ &~ \\
(\beta_{1}~,~B_{1}), &.....,& (\beta_{q}~,~B_{q}); \\
\end{array}~z\right]~= \sum_{k=0}^{\infty} \frac{\prod\limits_{j=1}^{p}~\Gamma{(\alpha_{j}+A_{j}k)}}{\prod\limits_{j=1}^{q}~\Gamma{(\beta_{j}+B_{j}k)}}~\frac{z^{k}}{k!}~,~~~~~~~~~\eqno(1.10)$$
where the coefficients $A_{1},\cdots, A_{p}$ and $B_{1},\cdots,
B_{q}$ are positive real numbers such that
$${\rm(i)}~~1+\sum_{j=1}^{q} B_{j}-\sum_{j=1}^{p} A_{j}>0~{\rm and}~0<|z|<\infty;~z\neq 0.~~~~~~~~~~~~~~~~~~~~~~~~~~~~~~~~~~~~~~~~~~~~~~~~~~~~~~~~~~~~\eqno(1.11)$$
$${\rm(ii)}~~1+\sum_{j=1}^{q} B_{j}-\sum_{j=1}^{p} A_{j}=0~{\rm and}~0<|z|<{A_{1}}^{-A_{1}}\dots {A_{p}}^{-A_{p}}{B_{1}}^{B_{1}}\dots {B_{q}}^{B_{q}}.~~~~~~~~~~~~~~~~~~~~~\eqno{(1.12)}$$

\noindent A special case of (1.10) is
$${}_p\Psi_{q}\left[\begin{array}{ccc}
(\alpha_{1}~,~1), &.....,& (\alpha_{p}~,~1);  \\
~ & ~& ~\\
(\beta_{1}~,~1), &.....,& (\beta_{q}~,~1); \\
\end{array}~z\right]~=~\frac{\prod\limits_{j=1}^{p}~\Gamma{(\alpha_{j})}}{\prod\limits_{j=1}^{q}~\Gamma{(\beta_{j})}}~{}_pF_{q}\left[\begin{array}{ccc}
\alpha_{1}, &.....,& \alpha_{p}~;  \\
~ & ~&~ \\
\beta_{1}, &.....,& \beta_{q}~; \\
\end{array}~z\right],~~~\eqno(1.13)$$
where ${}_pF_{q}$ is the generalized hypergeometric series defined by \cite{17}:
$${}_pF_{q}\left[\begin{array}{ccc}
\alpha_{1}, &.....,& \alpha_{p}~;  \\
~ & ~&~ \\
\beta_{1}, &.....,& \beta_{q}~; \\
\end{array}~z\right]=~\sum_{n=0}^{\infty} \frac{(\alpha_{1})_{n}\cdots (\alpha_{p})_{n}}{(\beta_{1})_{n}\cdots (\beta_{q})_{n}}~\frac{z^{n}}{n!}$$
$$~~~~~~~~~~~~~~~~~~~~~~~~~~~~~~~~~~~~~~~~~~~~~~~~~~={}_pF_{q}(\alpha_{1},\cdots,\alpha_{p};~\beta_{1},\cdots\beta_{q};~z),~~\eqno(1.14)$$
where $(\lambda)_{n}$ is the Pochhammer's symbol \cite{17}.

Furthermore, we also state here the following interesting and useful result stated by Edward [p.445]\cite{6}:
$$\int_{0}^{1}\int_{0}^{1}y^{\lambda}(1-x)^{\lambda-1}(1-y)^{\mu-1}(1-xy)^{1-\lambda-\mu}dx= \frac{\Gamma(\lambda)\Gamma(\mu)}{\Gamma(\lambda+\mu)},\eqno(1.15)$$
provided $0<\Re(\mu)<\Re(\lambda).$

\section{Integral operator involving generalized Bessel-Maitland function}

\begin{theorem}\label{Th2.1}
If $\nu,\delta,\eta,\gamma,\lambda\in \mathbb C$, $a\neq 0$, $\eta+p-q>0$, $\Re(\lambda)\geq0$, $\Re(\nu)\geq-1,\Re(\delta)\geq0,\Re(\gamma)\geq0$; $p,q>0$ and $q<\Re(\alpha)+p$, then  $$\int_{0}^{1}\int_{0}^{1}y^{\lambda}(1-x)^{\lambda-1}(1-y)^{\mu-1}(1-xy)^{1-\lambda-\mu}J_{\nu,\delta,p}^{\eta,\gamma,q}\left[\frac{ay(1-x)(1-y)}{(1-xy)^2}\right]dxdy$$
$$=\frac{\Gamma(\delta)}{\Gamma(\gamma)}{}_4\Psi_{3}\left[\begin{array}{lll}
(\lambda, 1),(\mu,1),(\gamma,q),(1,1);\\
&-a\\
(\nu+1,\eta),(\delta,p),(\lambda+\mu,2);\end{array}\right].\eqno(2.1)$$

\end{theorem}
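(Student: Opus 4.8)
The plan is to expand the generalized Bessel--Maitland function as a power series, interchange summation and double integration, reduce each resulting term to Edwards' integral (1.15), and recognize the surviving series as a ${}_4\Psi_3$ Wright function. First I would write the function in the integrand, according to (1.3), as
$$J_{\nu,\delta,p}^{\eta,\gamma,q}\left[\frac{ay(1-x)(1-y)}{(1-xy)^2}\right]=\sum_{n=0}^{\infty}\frac{(\gamma)_{qn}(-a)^n}{\Gamma(\eta n+\nu+1)(\delta)_{pn}}\,\frac{y^n(1-x)^n(1-y)^n}{(1-xy)^{2n}}.$$
Substituting this into the left-hand side of (2.1) and interchanging the order of summation and integration (permissible because the condition $\eta+p-q>0$ makes the Bessel--Maitland series an entire function of its argument, so the series converges absolutely and uniformly on the compact square $[0,1]^2$), the factor $y^n(1-x)^n(1-y)^n(1-xy)^{-2n}$ merges with the base weight.

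The key observation --- and the reason the argument $ay(1-x)(1-y)(1-xy)^{-2}$ is engineered as it is --- is that after collecting exponents the $n$-th inner integral reads
$$\int_{0}^{1}\!\!\int_{0}^{1}y^{\lambda+n}(1-x)^{(\lambda+n)-1}(1-y)^{(\mu+n)-1}(1-xy)^{1-(\lambda+n)-(\mu+n)}\,dx\,dy,$$
which is exactly (1.15) with $\lambda$ replaced by $\lambda+n$ and $\mu$ replaced by $\mu+n$. Hence every inner integral evaluates in closed form to $\Gamma(\lambda+n)\Gamma(\mu+n)/\Gamma(\lambda+\mu+2n)$, and the whole expression collapses to the single series
$$\sum_{n=0}^{\infty}\frac{(\gamma)_{qn}}{(\delta)_{pn}}\,\frac{\Gamma(\lambda+n)\Gamma(\mu+n)}{\Gamma(\eta n+\nu+1)\Gamma(\lambda+\mu+2n)}\,(-a)^n.$$

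Finally I would convert the generalized Pochhammer symbols back to Gamma quotients via $(\gamma)_{qn}=\Gamma(\gamma+qn)/\Gamma(\gamma)$ and $(\delta)_{pn}=\Gamma(\delta+pn)/\Gamma(\delta)$, factor out the prefactor $\Gamma(\delta)/\Gamma(\gamma)$, and reinsert the harmless factor $\Gamma(1+n)/n!=1$ so that the summand matches the template (1.10). Reading off the numerator parameter pairs $(\lambda,1),(\mu,1),(\gamma,q),(1,1)$ and the denominator pairs $(\nu+1,\eta),(\delta,p),(\lambda+\mu,2)$ then produces precisely the ${}_4\Psi_3$ on the right of (2.1); note that for this Wright function the convergence criterion (1.11) reduces exactly to the hypothesis $\eta+p-q>0$, so the final series is legitimately defined. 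I expect the only genuine obstacle to be the rigorous justification of the term-by-term integration (together with the Edwards-formula restriction $0<\Re(\mu)<\Re(\lambda)$ governing each shifted term); once absolute convergence is secured, the remaining manipulations are purely mechanical, because the choice of argument makes the reduction to (1.15) automatic.
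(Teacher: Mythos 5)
Your proposal is correct and follows essentially the same route as the paper: expand the Bessel--Maitland function via its defining series, interchange summation and integration, evaluate each term by Edwards' formula (1.15) with $\lambda\mapsto\lambda+n$, $\mu\mapsto\mu+n$, and identify the resulting series with the ${}_4\Psi_3$ in (1.10). In fact you supply more detail than the paper does (the explicit shifted Edwards integral and the convergence remarks), but the underlying argument is identical.
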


\begin{proof}
 To establish our main result (2.1), we denote the left-hand side of (2.1) by $I$ and then by using (1.9), we have:
$$I=\int_{0}^{1}\int_{0}^{1}y^{\lambda}(1-x)^{\lambda-1}(1-y)^{\mu-1}(1-xy)^{1-\lambda-\mu}$$
$$\times \sum\limits_{n=0}^{\infty}\frac{(\gamma)_{qn}}{{\Gamma(\eta n+\nu+1)}(\delta)_{pn}}\left[\frac{-ay(1-x)(1-y)}{(1-xy)^2}\right]^{n}dxdy.\eqno(2.2)$$

Now changing the order of integration and summation, (which is guaranteed under the given condition), to get:

$$=\frac{\Gamma(\delta)}{\Gamma(\gamma)}\sum\limits_{n=0}^{\infty}\frac{\Gamma(\gamma+qn)\Gamma(\lambda+n)\Gamma(\mu+n)\Gamma(1+n)}{\Gamma(\eta n+\nu+1)\Gamma(\delta+pn)\Gamma(\lambda+\mu+2n)}\frac{a^n}{n!}.\eqno(2.3)$$
Finally, summing up the above series with the help of (1.10 ), we easily arrive at the right-hand side of (2.3). This completes the proof of our main result.
\end{proof}

Next, we consider other variation of (2.1). In fact, we establish an integral formula for the generalized Bessel-Maitland function $J_{\nu,\delta,p}^{\eta,\gamma,q}(z)$, which is expressed in terms of the generalized hypergeometric function ${}_pF_{q}.$

\vspace{.35cm} \noindent {\bf  Variation of (2.1):} Let the conditions of our main result be satisfied, then the following integral formula holds true:
$$\int_{0}^{1} \int_{0}^{1} y^{\lambda}~(1-x)^{\lambda-1}~(1-y)^{\mu-1}~(1-xy)^{1-\lambda-\mu}~J_{\nu,\delta,p}^{\eta,\gamma,q} \left [\frac{ay(1-x)(1-y)}{(1-xy)^{2}}\right]~{dx}~{dy}~~~~~~~~~~~~~$$

$$=\frac{\Gamma{(\lambda)}\Gamma{(\mu)}}{\Gamma(\nu+1)\Gamma{(\lambda+\mu)}}$$

$${}_{q+3}F_{\eta+p+2}\left[
\begin{array}{ccccc}
~\Delta(q;~\gamma), & ~\lambda, & ~\mu, & ~1, & ~; \\
~ & ~ & ~ & ~ & ~ \\
~\Delta(\eta;\nu+1), & ~\Delta(p;\delta) ,& ~\Delta(2;\lambda+\mu); & ~ \\
\end{array}\frac{-a q^q}{4\eta^{\eta}p^p}
\right],\\\eqno(2.4)$$

where $\Delta(m;~l)$ abbreviates the array of m parameters $\frac{l}{m},~\frac{l+1}{m},~\cdots,\frac{l+m-1}{m}~,$\\

$~m\geq1.$

\begin{proof}
 In order to prove the result (3.1), using the results $$\Gamma (\alpha+n)=\Gamma(\alpha)(\alpha)_n$$
and
$$(l)_{kn}=~k^{kn}\left(\frac{l}{k}\right)_{n}\left(\frac{l+1}{k}\right)_{n} \cdots
\left(\frac{l+k-1}{k}\right)_{n},$$
(Gauss multiplication theorem) in (2.3) and summing up the given series with the help of (1.14), we easily arrive at our required result (2.4).
\end{proof}

\section{ Special Cases}

\noindent {\bf (i).} On replacing $ \nu$ by $\nu-1 $ in (2.1) and then by using (1.9), we get:
$$\int_{0}^{1} \int_{0}^{1} y^{\lambda}~(1-x)^{\lambda-1}~(1-y)^{\mu-1}~(1-xy)^{1-\lambda-\mu}~E_{\nu,\delta,p}^{\eta,\gamma,q} \left[\frac{ay(1-x)(1-y)}{(1-xy)^{2}}\right]~{dx}~{dy}~~~~~~~~~~~~~$$
$$=~\frac{\Gamma(\delta)}{\Gamma(\gamma)}~{}_4\Psi_{3}\left[\begin{array}{ccccc}
                                                 (\gamma,~q), & (\lambda,~1), & (\mu,~1) & (1,~1) ;&  \\
                                                  &  &  &  & ~a \\
                                                 (\eta~,~\nu), & (\delta,~p), & (\lambda+\mu,2) & ; &
                                               \end{array}\right],~~\eqno(3.1)$$

where $\nu,\mu,\gamma,\delta,\lambda \in\mathbb C$, $\Re(\nu)>0, \Re(\lambda)>0, \Re(\gamma)>0, \Re(\delta)>0$; $p,q>0$ and $q<\Re(\alpha)+p$.

\vspace{.35cm}
\noindent {\bf (ii).} On replacing $ \nu$ by $\nu-1 $ in (2.4) and then by using (1.9), we found:
$$\int_{0}^{1} \int_{0}^{1} y^{\lambda}~(1-x)^{\lambda-1}~(1-y)^{\mu-1}~(1-xy)^{1-\lambda-\mu}~E_{\nu,\delta,p}^{\eta,\gamma,q} \left[\frac{ay(1-x)(1-y)}{(1-xy)^{2}}\right]~{dx}~{dy}~~~~~~~~~~~~~$$
$$=\frac{\Gamma(\lambda)\Gamma(\mu)}{\Gamma(\nu)\Gamma(\lambda+\mu)}{}_{q+3}F_{\eta+p+2}\left[
                                                                                                \begin{array}{ccccc}
                                                                                                  ~\Delta(q;\gamma), & ~\lambda ,& ~\mu ,& ~1 & ; \\
                                                                                                  ~ & ~ & ~ & ~ & ~ \\
                                                                                                  ~\Delta(\eta;\nu), & ~\Delta(p;\delta), & ~\Delta(2;\lambda+\mu), & ~ & ; \\
                                                                                                \end{array}\frac{aq^{q}}{4\eta^{\eta}p^{p}}
                                                                                              \right],~~~~~~~~~\eqno(3.2)$$

where $\nu,\mu,\gamma,\delta,\lambda \in\mathbb C$, $\Re(\nu)>0, \Re(\lambda)>0, \Re(\gamma)>0, \Re(\delta)>0$; $p,q>0$ and $q<\Re(\alpha)+p$.

\vspace{.35cm}
\noindent {\bf (iii).} On setting $ p=\delta=1$ and replacing $\nu$ by $ \nu-1 $ in (2.1) and then by using (1.7), we find:
$$\int_{0}^{1} \int_{0}^{1} y^{\lambda}~(1-x)^{\lambda-1}~(1-y)^{\mu-1}~(1-xy)^{1-\lambda-\mu}~E_{\nu,q}^{\eta,\gamma} \left[\frac{ay(1-x)(1-y)}{(1-xy)^{2}}\right]~{dx}~{dy}~~~~~~~~~~~~~$$
$$=\frac{1}{\Gamma(\gamma)}~~{}_3\Psi_{2}\left[\begin{array}{ccccc}
                                                 (\gamma,~q), & (\lambda,~1), & (\mu,~1); &  \\
                                                  &  &  &  & ~a \\
                                                 (\eta,\nu), & (\lambda+\mu,~2)  ; &
                                               \end{array}\right],~~~~~~\eqno(3.3)$$

where $ \nu,\mu,\gamma,\lambda,\eta\in\mathbb C$; $\Re(\nu)>0,\Re(\mu)>0,Re(\gamma)>0, Re(\delta)>0$ and $q\in(0,1)\bigcup N$.

\vspace{.35cm}
\noindent {\bf (iv).} On setting $ p=\delta=1 $ and replacing $\nu$ by$ \nu-1 $ in (2.4) and then by using (1.7), we attain:
$$\int_{0}^{1} \int_{0}^{1} y^{\lambda}~(1-x)^{\lambda-1}~(1-y)^{\mu-1}~(1-xy)^{1-\lambda-\mu}~E_{\nu,q}^{\eta,\gamma} \left[\frac{ay(1-x)(1-y)}{(1-xy)^{2}}\right]~{dx}~{dy}~~~~~~~~~~~~~$$
$$=~\frac{\Gamma(\lambda)\Gamma(\mu)}{\Gamma(\nu)\Gamma(\lambda+\mu)}~{}_{q+2}F_{\eta+2}\left[
                                                                                                \begin{array}{ccccc}
                                                                                                  ~\Delta(q;\gamma), & ~\lambda ,& ~\mu ,& ~ & ; \\
                                                                                                  ~ & ~ & ~ & ~ & ~ \\
                                                                                                  ~\Delta(\eta;\nu), & ~ & ~\Delta(2;\lambda+\mu), & ~ & ; \\
                                                                                                \end{array}\frac{aq^{q}}{4\eta^{\eta}}
                                                                                              \right],~~~~~~~~~\eqno(3.4)$$

where $ \nu,\mu,\gamma,\lambda,\eta\in\mathbb C$; $\Re(\nu)>0,\Re(\mu)>0,Re(\gamma)>0, Re(\delta)>0$ and $q\in(0,1)\bigcup N$.

\vspace{.35cm}
\noindent {\bf (v).} On setting $p=q=\delta=1$ and replacing $\nu$ by $ \nu-1$ in (2.1) and then by using (1.6), we achieve:
$$\int_{0}^{1} \int_{0}^{1} y^{\lambda}~(1-x)^{\lambda-1}~(1-y)^{\mu-1}~(1-xy)^{1-\lambda-\mu}~E_{\eta,\nu}^{\gamma} \left[\frac{ay(1-x)(1-y)}{(1-xy)^{2}}\right]~{dx}~{dy}~~~~~~~~~~~~~$$
$$=\frac{1}{\Gamma(\gamma)}~~{}_3\Psi_{2}\left[\begin{array}{ccccc}
                                                 (\gamma,~1), & (\lambda,~1), & (\mu,~1); &  \\
                                                  &  &  &  & ~a \\
                                                 (\eta,\nu), & (\lambda+\mu,~2)  ; &
                                               \end{array}\right],~~~~~~\eqno(3.5)$$

where $\nu,\eta,\gamma,\lambda,\mu\in\mathbb C$; $\Re(\nu)>0,\Re(\lambda)>0,Re(\gamma)>0,\Re(\mu)>0$.

\vspace{.35cm}
\noindent {\bf (vi).} On setting $ p=q=\delta=1 $ and replacing $\nu$ by $ \nu-1 $ in (2.4) and then by using (1.6), we find:
$$\int_{0}^{1} \int_{0}^{1} y^{\lambda}~(1-x)^{\lambda-1}~(1-y)^{\mu-1}~(1-xy)^{1-\lambda-\mu}~E_{\eta,\nu}^{\gamma} \left[\frac{ay(1-x)(1-y)}{(1-xy)^{2}}\right]~{dx}~{dy}~~~~~~~~~~~~~$$
$$=~\frac{\Gamma(\lambda)\Gamma(\mu)}{\Gamma(\nu)\Gamma(\lambda+\mu)}~{}_{3}F_{\eta+2}\left[
                                                                                                \begin{array}{ccccc}
                                                                                                  ~\Delta(1;\gamma), & ~\lambda ,& ~\mu ,& ~ & ; \\
                                                                                                  ~ & ~ & ~ & ~ & ~ \\
                                                                                                  ~\Delta(\eta;\nu), & ~ & ~\Delta(2;\lambda+\mu), & ~ & ; \\
                                                                                                \end{array}\frac{a}{4\eta^{\eta}}
                                                                                              \right],~~~~~~~~~\eqno(3.6)$$

where $\nu,\eta,\gamma,\lambda,\mu\in\mathbb C$; $\Re(\nu)>0,\Re(\lambda)>0,Re(\gamma)>0,\Re(\mu)>0$.

\vspace{.35cm}
\noindent {\bf (vii).} On setting $ p=q=\delta=\gamma=1$ and replacing $\nu$ by $ \nu-1 $ in (2.1) and then by using (1.5), we get:
$$\int_{0}^{1} \int_{0}^{1} y^{\lambda}~(1-x)^{\lambda-1}~(1-y)^{\mu-1}~(1-xy)^{1-\lambda-\mu}~E_{\eta,\nu} \left[\frac{ay(1-x)(1-y)}{(1-xy)^{2}}\right]~{dx}~{dy}~~~~~~~~~~~~~$$
$$={}_3\Psi_{2}\left[\begin{array}{ccccc}
                                                 (1,~1), & (\lambda,~1), & (\mu,~1); &  \\
                                                  &  &  &  & ~a \\
                                                 (\eta,\nu), & (\lambda+\mu,~2)  ; &
                                               \end{array}\right],~~~~~~\eqno(3.7)$$

where $ \nu,\eta,\mu,\lambda\in\mathbb C$; $\Re(\nu)>0,Re(\mu)>0, Re(\lambda)>0$.

\vspace{.35cm}

\noindent {\bf (viii).} On setting $ p=q=\delta=\gamma=1$ and replacing $\nu$ by $ \nu-1 $ in (2.4) and then by using (1.5), we obtain:
$$\int_{0}^{1} \int_{0}^{1} y^{\lambda}~(1-x)^{\lambda-1}~(1-y)^{\mu-1}~(1-xy)^{1-\lambda-\mu}~E_{\eta,\nu} \left[\frac{ay(1-x)(1-y)}{(1-xy)^{2}}\right]~{dx}~{dy}~~~~~~~~~~~~~$$
$$=~\frac{\Gamma(\lambda)\Gamma(\mu)}{\Gamma(\nu)\Gamma(\lambda+\mu)}~{}_{2}F_{\eta+2}\left[
                                                                                                \begin{array}{ccccc}
                                                                                                  ~ & ~\lambda ,& ~\mu ,& ~ & ; \\
                                                                                                  ~ & ~ & ~ & ~ & ~ \\
                                                                                                  ~\Delta(\eta;\nu), & ~ & ~\Delta(2;\lambda+\mu), & ~ & ; \\
                                                                                                \end{array}\frac{a}{4\eta^{\eta}}
                                                                                              \right],~~~~~~~~~\eqno(3.8)$$

where $ \nu,\eta,\mu,\lambda\in\mathbb C$; $\Re(\nu)>0,Re(\mu)>0, Re(\lambda)>0$.

 \vspace{.35cm}

\noindent {\bf(ix).} On setting $p=q=\delta=\gamma=1$ and $\nu=0$ in (2.1) and then by using (1.4), we attain:
$$\int_{0}^{1} \int_{0}^{1} y^{\lambda}~(1-x)^{\lambda-1}~(1-y)^{\mu-1}~(1-xy)^{1-\lambda-\mu}~E_{\eta} \left[\frac{ay(1-x)(1-y)}{(1-xy)^{2}}\right]~{dx}~{dy}~~~~~~~~~~~~~$$
$$={}_3\Psi_{2}\left[\begin{array}{ccccc}
                                                 (1,~1), & (\lambda,~1), & (\mu,~1); &  \\
                                                  &  &  &  & ~a \\
                                                 (\eta,\nu), & (\lambda+\mu,~2)  ; &
                                               \end{array}\right],~~~~~~\eqno(3.9)$$
where $ \mu,\lambda,\eta\in\mathbb C$; $\Re(\mu)>0,\Re(\lambda)>0$.

 \vspace{.35cm}

 \noindent {\bf(x).} On setting $ p=q=\delta=\gamma=1 $ and $ \nu=0 $ in (2.4) and then by using  (1.4), we get:
$$\int_{0}^{1} \int_{0}^{1} y^{\lambda}~(1-x)^{\lambda-1}~(1-y)^{\mu-1}~(1-xy)^{1-\lambda-\mu}~E_{\eta} \left[\frac{ay(1-x)(1-y)}{(1-xy)^{2}}\right]~{dx}~{dy}~~~~~~~~~~~~~$$
$$=~\frac{\Gamma(\lambda)\Gamma(\mu)}{\Gamma(\lambda+\mu)}~{}_{2}F_{\eta+2}\left[
                                                                                                \begin{array}{ccccc}
                                                                                                  ~ & ~\lambda ,& ~\mu ,& ~ & ; \\
                                                                                                  ~ & ~ & ~ & ~ & ~ \\
                                                                                                  ~\Delta(\eta;\nu), & ~ & ~\Delta(2;\lambda+\mu), & ~ & ; \\
                                                                                                \end{array}\frac{a}{4\eta^{\eta}}
                                                                                              \right],~~~~~~~~~\eqno(3.10)$$

where $ \mu,\lambda,\eta\in\mathbb C$; $\Re(\mu)>0,\Re(\lambda)>0$.

\end{document}